\newcommand\RR{\mathbb{R}}
\newcommand\FF{\mathscr{F}}
\newcommand\GG{\mathscr{G}}
\newcommand\PP{\mathcal{P}}
\renewcommand{\PP}{\mathbb{P}} 
\newcommand{\EE}{\mathbb{E}} 
\DeclareMathOperator\sI{I}   
\newcommand{\rI}{\mathrm{I}} 
\newcommand{\sign}{\mathrm{sgn}}
\newtheorem{theorem}{Theorem}[section]
\newtheorem{definition}[theorem]{Definition}
\newtheorem{example}[theorem]{Example}
\newtheorem{examples}[theorem]{Examples}
\newtheorem{lemma}[theorem]{Lemma}
\newtheorem{proposition}[theorem]{Proposition}
\newcommand{\address}{Address: Department of Mathematics, University of North Texas, 1155 Union Circle \#311430, Denton, TX 76203-5017, USA; E-mail: allaart@unt.edu, AndrewAllen@my.unt.edu}
\title{A random walk version of Robbins' problem: small horizon}
\author{Pieter C. Allaart and Andrew Allen \footnote{\address}}
\begin{document}

\maketitle

\begin{abstract}

In Robbins' problem of minimizing the expected rank, a finite sequence of $n$ independent, identically distributed random variables are observed sequentially and the objective is to stop at such a time that the expected rank of the selected variable (among the sequence of all $n$ variables) is as small as possible. In this paper we consider an analogous problem in which the observed random variables are the steps of a symmetric random walk. Assuming continuously distributed step sizes, we describe the optimal stopping rules for the cases $n=2$ and $n=3$ in two versions of the problem: a ``full information" version in which the actual steps of the random walk are disclosed to the decision maker; and a ``partial information" version in which only the relative ranks of the positions taken by the random walk are observed. When $n=3$, the optimal rule and expected rank depend on the distribution of the step sizes. We give sharp bounds for the optimal expected rank in the partial information version, and fairly sharp bounds in the full information version.

\bigskip
{\it AMS 2010 subject classification}: 60G40 (primary), 60G50 (secondary)

\bigskip
{\it Key words and phrases}: Expected rank, Robbins' problem, Stopping time, Symmetric random walk
\end{abstract}

\section{Introduction}

Let $X_1,X_2,\dots,X_n$ be a finite sequence of independent, identically distributed (i.i.d.) random variables whose common distribution is continuous and symmetric about $0$, and consider the random walk $S_n:=X_1+X_2+\dots+X_n$, $n\geq 1$, with $S_0\equiv 0$. Let 
$$R_k := \sum_{i=0}^n \sI(S_k \leq S_i)$$ 
denote the rank of $S_k$ among $S_0, S_1, \cdots, S_n$. In this paper, we are interested in finding a stopping time $\tau$ so as to minimize the expected rank $\EE(R_\tau)$. We consider two versions of the problem: In the {\em full information version}, we assume that the values of $X_1,X_2,\dots$ are observed completely, so we can use any stopping time $\tau$ adapted to the filtration $\FF_k:=\sigma(\{X_1,\dots,X_k\})=\sigma(\{S_0,S_1,\dots,S_k\}), k=0,1,\dots,n$. By contrast, in the {\em relative ranks version} of the problem, we assume that only the relative ranks
\[
\tilde{R}_k := \sum_{i=0}^k \rI(S_k \leq S_i), \qquad k=0,1,\dots,n
\]
are observed, so only stopping times adapted to the filtration $\GG_k:=\sigma(\{\tilde{R}_0,\tilde{R}_1,\dots,\tilde{R}_k\})$ may be used.

In this note, we give a complete solution to both versions of the problem when the time horizon is small ($n\leq 3$). In a forthcoming paper, we will present bounds for the optimal expected rank when $n$ is large, and consider a continuous-time version of the problem in which the random walk is replaced by a Brownian motion. As far as we are aware, these are the first works to give a detailed treatment of the expected rank problem for random walks. Although in this paper we consider the problem for just one simple stochastic model (i.e. a symmetric random walk), it should be noted that ranks are invariant under monotone increasing transformations, so the solution to our problem will also apply to, for instance, a geometric Brownian motion sampled at discrete time steps.

The problem of minimizing the expected rank arose in the 1960's as a variation of the classical secretary problem. In the traditional setup, there is a sequence $\xi_1,\dots,\xi_n$ of i.i.d. continuous random variables, and the objective is to minimize the expected rank $\EE(\rho_\tau)$, where $\rho_k:=\sum_{i=1}^n \sI(\xi_k\leq \xi_i)$, $k=1,\dots,n$. The relative ranks version of this problem, in which only the random variables $\tilde{\rho}_k:=\sum_{i=1}^k \sI(\xi_k\leq \xi_i)$ are observed, was solved completely by Chow et al. \cite{CMRS}. They showed that, as $n\to\infty$, the optimal expected rank converges to approximately 3.87. On the other hand, the full information version of the problem, now known as {\em Robbins' problem}, remains open to this day. Despite considerable effort by various authors (e.g. \cite{Assaf-SamuelCahn,Bruss-Ferguson1,Bruss-Ferguson2}), the asymptotic expected rank is known only to lie between $1.908$ and $2.327$. For an excellent survey of what is known about Robbins' problem, see \cite{Bruss}. Recently, Dendievel and Swan \cite{Dendievel-Swan} found the exact solution of Robbins' problem for $n=4$.

It should be noted that in the expected rank problem for i.i.d. random variables, neither the optimal stopping rule nor the expected rank depends on the distribution of the $\xi_i$'s, as long as it is continuous. As will be seen below, this is no longer the case for the random walk version of the problem. For $n\geq 3$, both the optimal rule and the optimal expected rank depend on the distribution of the steps $X_1,X_2,\dots$ of the walk in both the full information version and the relative ranks version of the problem, though less so in the latter.

\section{Results} \label{sec:results}

Let $F$ denote the common distribution function of $X_1,X_2,\dots$. If $n=1$, it does not matter whether we stop at time $0$ or time $1$: in either case our expected rank is $3/2$ by the symmetry of $X_1$.
For a random walk with two steps, the problem is still distribution-invariant:

\begin{theorem} \label{thm:two-step}
Let $n=2$. Then the optimal rule in both the full information and the relative rank versions of the problem is
\[
\tau^*=\begin{cases}
1 & \mbox{if $S_1>0$},\\
2 & \mbox{otherwise},
\end{cases}
\]
and the minimum expected rank is $\EE(R_{\tau^*})=15/8$.
\end{theorem}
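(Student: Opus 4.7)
The plan is to verify that $\tau^*$ achieves expected rank $15/8$ and that no rule in either version does better. The key simplification is that although the triple $(S_0, S_1, S_2) = (0, X_1, X_1+X_2)$ is not exchangeable, the six almost sure orderings of its components have distribution-free probabilities.

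First I would compute these six probabilities. The orderings $0<S_1<S_2$ and $S_2<S_1<0$ correspond to $\{X_1>0, X_2>0\}$ and its negative, each with probability $1/4$. For the remaining four it suffices to evaluate $\PP(0 < S_2 < S_1) = \PP(X_1 > 0,\, X_2 < 0,\, X_1+X_2 > 0) = \PP(X_1 > -X_2 > 0)$; setting $Y := -X_2$, which by the symmetry of $F$ has the same distribution as $X_1$ and remains independent of it, this equals $\PP(X_1 > Y > 0)$, and exchangeability of the i.i.d.\ pair $(X_1, Y)$ gives $\tfrac{1}{2}\PP(X_1>0,\, Y>0) = 1/8$. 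The symmetry $(X_1, X_2) \stackrel{d}{=} (-X_1,-X_2)$ then pins down two of the remaining three orderings to $1/8$, and the last follows from the probabilities summing to $1$. Tabulating the rank of $S_{\tau^*}$ for each of the six orderings (with $\tau^* = 1$ on exactly those with $S_1 > 0$) and averaging yields $\EE(R_{\tau^*}) = 15/8$, distribution-free.

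For optimality I would compare stopping at time $1$ with continuing. Conditioning on $S_1 = s$ and using the independence of $X_2$ from $S_1$ gives $\EE[R_1 \mid S_1 = s] = \tfrac{3}{2} + \sI(s \leq 0)$ and $\EE[R_2 \mid S_1 = s] = \tfrac{3}{2} + F(-s)$. For $s > 0$, $F(-s) \in (0, 1/2)$, so stopping is strictly better; for $s < 0$, $F(-s) \in (1/2, 1)$, so continuing is strictly better. Since the optimal time-$1$ decision depends only on $\sign(S_1)$, which is $\GG_1$-measurable (it equals $2-\tilde{R}_1$), the rule $\tau^*$ is feasible in the relative ranks version; because that problem optimizes over a smaller feasible set than full information, its optimum must coincide with $15/8$ as well. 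Finally, stopping at time $0$ yields $\EE(R_0) = 2 > 15/8$ and so is suboptimal in both versions.

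The only nontrivial step is the distribution-invariance of the six ordering probabilities, which hinges on the interplay of the two symmetries of $(X_1, X_2)$; everything else is routine bookkeeping.
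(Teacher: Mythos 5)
Your argument is correct. The optimality step is the same as the paper's: both compare $\EE[R_1\mid S_1=s]=\tfrac32+\sI(s\le 0)$ with $\EE[R_2\mid S_1=s]=\tfrac32+F(-s)$ (the paper writes the latter as $2.5-F(X_1)$ when $S_1\le 0$, which agrees with your formula by symmetry of $F$), conclude that one should stop exactly when $S_1>0$, observe that this decision is $\GG_1$-measurable so the relative-ranks optimum coincides with the full-information one, and dispose of stopping at time $0$ via $\EE(R_0)=2$. Where you diverge is in evaluating the value of the rule: the paper integrates $2.5-F(x)$ over $\{x<0\}$ and reduces to $\int_0^{1/2}u\,du$, whereas you enumerate the six orderings of $(0,S_1,S_2)$, show their probabilities $(\tfrac14,\tfrac18,\tfrac18,\tfrac18,\tfrac18,\tfrac14)$ are distribution-free, and tabulate the rank attained in each. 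Your route is essentially the $n=2$ analogue of the permutation table the paper builds for the $n=3$ relative-ranks case, and it makes the distribution-invariance of $15/8$ more transparent; the paper's integral is shorter but hides that invariance in the substitution $u=F(x)$. One cosmetic caveat: your claims that stopping (resp.\ continuing) is \emph{strictly} better for $s>0$ (resp.\ $s<0$) need $F(-s)\in(0,1)$, which fails for compactly supported $F$ when $|s|$ is large; only the weak inequalities are needed, and they always hold. Also $\sign(S_1)$ equals $2-\tilde R_1$ only when $S_1>0$ (it is $0$ versus $-1$ otherwise), but the relevant point --- that $\sign(S_1)$ is determined by $\tilde R_1$ --- is correct.
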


By contrast, for a random walk with three steps, the optimal rule and expected return depend on the distribution in both versions of the problem.

\begin{theorem} \label{thm:three-step-full}
Let $n=3$. In the full information version of the problem, there is a number $x_1^*>0$ (given implicitly as a solution of equation \eqref{eq:x1-equation} below) such that the optimal rule is
\begin{equation}
\tau^*=\begin{cases}
1 & \mbox{if $0<X_1\leq x_1^*$},\\
2 & \mbox{if $X_1>x_1^*$ and $X_2\in(0,\infty)\cup(-X_1,-\frac{1}{2} X_1]$; or} \\
  & \ \ \ \mbox{$X_1\leq 0$ and ${X_2 \in (0,-\frac12 X_1] \cup (-X_1, \infty)}$},\\
3 & \mbox{in all other cases}.
\end{cases}
\end{equation}
(See Figure \ref{fig:stopping-regions}.) Moreover, $F(x_1^*)\geq \frac12+\frac{\sqrt{2}}{4}\approx .85355$, and the optimal expected rank $\EE(R_{\tau^*})$ satisfies the inequalities
\begin{equation}
2.2413\leq \EE(R_{\tau^*})\leq \frac{55}{24}\approx 2.2917,
\label{eq:V-estimates}
\end{equation}
in which the upper bound is attained.
\end{theorem}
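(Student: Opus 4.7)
My approach is backward induction, exploiting the symmetry of the step distribution via the identity
\begin{equation*}
\EE[R_k\mid\FF_k]=1+\#\{i<k:S_i>S_k\}+\frac{n-k}{2},
\end{equation*}
which holds because for $i>k$ the future increment $S_i-S_k$ is symmetric continuous and independent of $\FF_k$, hence $\PP(S_i>S_k\mid\FF_k)=\tfrac12$. For $n=3$ this gives $\EE[R_0]=5/2$, $\EE[R_1\mid\FF_1]=2+I(X_1<0)$, $\EE[R_2\mid\FF_2]=\tfrac32+I(S_2<0)+I(X_2<0)$, and $\EE[R_3\mid\FF_2]=\tfrac72-F(S_2)-F(X_2)$.

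At time $k=2$, comparing the last two expressions yields the distribution-free stopping condition $F(S_2)+F(X_2)+I(S_2<0)+I(X_2<0)\le 2$; a sign-case analysis on $(S_2,X_2)$ reduces this to the rule in the theorem. At time $k=1$, define $\phi(x):=\EE[V_2\mid X_1=x]$ where $V_2$ is the optimal value at time 2, and integrate the time-2 rule against $X_2\sim F$; for $x>0$ this yields
\begin{equation*}
\phi(x)=\tfrac{3}{4}+\tfrac{5}{2}[F(x)-F(x/2)]+\int_{\{u<-x\}\cup\{-x/2<u<0\}}\bigl[\tfrac{7}{2}-F(x+u)-F(u)\bigr]f(u)\,du,
\end{equation*}
and the implicit equation $\phi(x_1^*)=2$ is equation \eqref{eq:x1-equation}. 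Direct computation gives $\phi(0^+)=9/4>2$ and $\phi(\infty)=15/8<2$, so a crossing exists; differentiating under the integral sign one checks $\phi'(x)=(\tfrac12-F(x))f(x)-\int_A f(x+u)f(u)\,du<0$ on $(0,\infty)$, so $x_1^*$ is unique and $\tau^*=1$ precisely on $(0,x_1^*]$. A parallel calculation for $x\le 0$ gives $\phi(x)<3=\EE[R_1\mid X_1=x]$, so one always continues; finally $\EE[V_1]<5/2$ justifies continuing at time 0.

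For the bound $F(x_1^*)\ge\tfrac12+\tfrac{\sqrt{2}}{4}$, write $p=F(x_1^*)$ and $q=F(x_1^*/2)$. Using $\int F(u)f(u)\,du=\tfrac12 F(u)^2$, the equation $\phi(x_1^*)=2$ simplifies to the clean identity $J(x_1^*)=\tfrac38-\tfrac12(p^2-q^2)$, where $J(x_1^*):=\int_{\text{continuation region}}F(x_1^*+u)f(u)\,du$. I would then bound $J$ using the distribution-free pointwise estimates $F(x_1^*+u)\in[q,p]$ on $(-x_1^*/2,0)$ and $F(x_1^*+u)\in[F(u),\tfrac12]$ on $u<-x_1^*$; combining these with the equation and extremizing over $q\in[\tfrac12,p]$ yields a quadratic inequality in $p$ whose relevant root is $\tfrac12+\tfrac{\sqrt 2}{4}$.

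The bounds on $\EE[R_{\tau^*}]$ come from writing
\begin{equation*}
\EE[R_{\tau^*}]=\int_0^{x_1^*}2\,f(x)\,dx+\int_{x_1^*}^\infty\phi(x)f(x)\,dx+\int_{-\infty}^0\phi(x)f(x)\,dx
\end{equation*}
and viewing it as a functional of $F$. For the upper bound $55/24$, I would identify an extremal $F^*$ at which the distribution-free bounds above become tight---likely one whose mass concentrates so that the walk mimics a symmetric $\pm 1$ walk---and verify $\EE[R_{\tau^*}](F^*)=55/24$ by direct calculation, giving both the bound and its attainment. For the lower bound $2.2413$, a careful numerical minimization over symmetric continuous $F$ seems necessary; reducing via the known form of $\tau^*$ should produce a low-dimensional optimization. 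The hardest step will be this last one: pinning down the extremal distributions and verifying sharpness of both constants, particularly the lower bound, which appears to admit no closed form and will likely require numerical work.
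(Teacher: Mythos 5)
Your backward induction, the derivation of the stopping rule at times $2$ and $1$, and the quantile bound are essentially the paper's argument. Your identity $J(x_1^*)=\tfrac38-\tfrac12(p^2-q^2)$ combined with the pointwise bounds $F(x_1^*+u)\le p$ on $(-x_1^*/2,0)$ and $F(x_1^*+u)\le\tfrac12$ on $(-\infty,-x_1^*)$ reduces to exactly the inequality $p(1-p)\le\tfrac18$ that the paper extracts from $W_1(x)\ge\tfrac{15}{8}+F(x)\{1-F(x)\}$, so that part is fine. Two small caveats: the paper assumes only that $F$ is continuous, so your differentiation under the integral sign presupposes a density you have not been given, and the strict inequality $\phi'(x)<0$ (hence your claimed uniqueness of $x_1^*$) is unwarranted -- the paper deliberately claims only that $W_1$ is nonincreasing and that $x_1^*$ need not be unique; monotonicity alone suffices for the form of the rule.

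The genuine gaps are both in \eqref{eq:V-estimates}. For the upper bound, computing $\EE[R_{\tau^*}]$ at a single extremal $F^*$ proves \emph{attainment} of $55/24$ but not the inequality $\EE[R_{\tau^*}]\le 55/24$ for \emph{every} symmetric continuous $F$; your "distribution-free bounds" on $\phi$ are lower bounds and cannot be "made tight" to yield an upper bound. A uniform argument is needed: the paper imports it from the relative-ranks version, where the optimal value is $\min\{55/24,\,109/48+2p\}\le 55/24$ for all $F$, and observes that full information can only do better. For the lower bound, "a careful numerical minimization over symmetric continuous $F$" is an infinite-dimensional optimization and is not a proof as stated; the constant $2.2413$ is in fact the analytic value $\tfrac{109-\sqrt2}{48}$, obtained in the paper by combining a pointwise estimate $W_1(x)\ge\tfrac{23}{8}-F(x)-\tfrac12F(\tfrac{x}{2})+\tfrac12F^2(\tfrac{x}{2})$ for $x<0$, the estimate $W_1(x)\ge\tfrac{15}{8}+F(x)\{1-F(x)\}$ for $x>x_1^*$, and the quantile bound $F(x_1^*)\ge\tfrac12+\tfrac{\sqrt2}{4}$, then minimizing a cubic in $F(x_1^*)$. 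Without an argument of this type your proposal does not establish either constant.
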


\begin{figure}
\begin{center}
\begin{picture}(220,220)(-20,-20)
\thicklines
\put(100,-5){\vector(0,1){210}}
\put(210,102){\makebox(0,0)[tl]{$x_1$}}
\put(-5,100){\vector(1,0){210}}
\put(95,217){\makebox(0,0)[tl]{$x_2$}}
\thinlines
\dashline{4}(140,0)(140,200)
\put(125,215){\makebox(0,0)[tl]{$x_1=x_1^*$}}
\put(100,100){\line(-1,1){100}}
\put(-20,215){\makebox(0,0)[tl]{$x_2=-x_1$}}
\put(100,100){\line(-2,1){100}}
\put(-55,165){\makebox(0,0)[tl]{$x_2=-x_1/2$}}
\put(140,60){\line(1,-1){60}}
\put(185,-5){\makebox(0,0)[tl]{$x_2=-x_1$}}
\put(140,80){\line(2,-1){60}}
\put(198,48){\makebox(0,0)[tl]{$x_2=-x_1/2$}}
\put(115,150){\makebox(0,0)[tl]{$1$}}
\put(115,50){\makebox(0,0)[tl]{$1$}}
\put(165,150){\makebox(0,0)[tl]{$2$}}
\put(165,88){\makebox(0,0)[tl]{$3$}}
\put(165,55){\makebox(0,0)[tl]{$2$}}
\put(160,15){\makebox(0,0)[tl]{$3$}}
\put(60,180){\makebox(0,0)[tl]{$2$}}
\put(35,150){\makebox(0,0)[tl]{$3$}}
\put(35,120){\makebox(0,0)[tl]{$2$}}
\put(45,55){\makebox(0,0)[tl]{$3$}}
\end{picture}
\caption{The stopping regions for the full-information problem with $n=3$. Each region is labeled with the corresponding value of $\tau^*$.}
\label{fig:stopping-regions}
\end{center}
\end{figure}
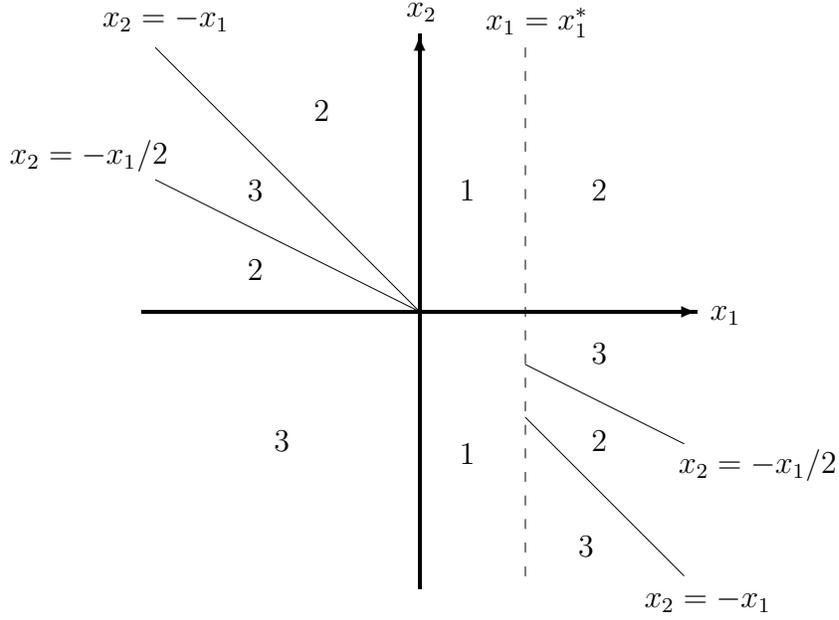


For the relative ranks version of the problem, we introduce two parameters $p=p_X$ and $q=q_X$, defined as
\begin{align*}
p:&=p_X:=\PP(0<X_1<X_2<X_3<X_1+X_2), \\ 
q:&=q_X:=\PP(0<X_1<X_2<X_1+X_2<X_3).
\end{align*}
Observe that $p+q=\PP(0<X_1<X_2<X_3)=1/48$ by the symmetry of $X$. We first give sharp bounds on $p$ for an important class of random variables.

\begin{definition}
Let $\mathscr{U}$ be the class of symmetric random variables $X$ whose distribution function $F$ is continuous and satisfies 
\begin{equation*}
F(x)-F(0)\geq F(x+y)-F(y) \qquad \forall x,y > 0.
\end{equation*}
\end{definition}
Note that all unimodal symmetric random variables are in $\mathscr{U}$, but so is, for example, the random variable that is uniform on $(-4,-3) \cup (-2,2) \cup (3,4)$.

\begin{proposition} \label{prop:p-bounds-unimodal}
Let $X\in \mathscr{U}$. Then $0<p_X\leq 1/96$. These bounds are sharp, and the upper bound is attained when $X$ has the uniform distribution on $(-1,1)$ (or any other interval symmetric about $0$).
\end{proposition}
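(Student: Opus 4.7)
The plan is to use the identity $p_X + q_X = 1/48$ noted just above the proposition to reduce the upper bound $p_X \leq 1/96$ to the single inequality $p_X \leq q_X$. I first recast both probabilities symmetrically in $X_1, X_2, X_3$: since the conditions defining them are invariant under permutation, $6 p_X = \PP(X_1, X_2, X_3 > 0,\ \max_i X_i \leq X_1 + X_2 + X_3 - \max_i X_i)$ and $6 q_X$ is the same probability with the reverse inequality. Because $\PP(X_1, X_2, X_3 > 0) = 1/8$, the inequality $p_X \leq q_X$ is equivalent to $\PP(\max_i Z_i > Z_1 + Z_2 + Z_3 - \max_i Z_i) \geq 1/2$ where $Z_i := |X_i|$, which in turn, by the disjointness and equiprobability of the events $\{Z_i > \sum_{j \neq i} Z_j\}$, reduces to the single inequality $\PP(Z_1 > Z_2 + Z_3) \geq 1/6$.

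To prove this inequality, let $G(z) := \PP(|X| \leq z) = 2F(z) - 1$ with density $g = G'$. A short calculation shows the $\mathscr{U}$ hypothesis is equivalent to subadditivity of $G$ on $[0, \infty)$, or equivalently $G(z_1 - z_2) \geq G(z_1) - G(z_2)$ for $0 < z_2 < z_1$. Integrating out $Z_3$ first gives
\begin{align*}
\PP(Z_1 > Z_2 + Z_3) &= \iint_{0 < z_2 < z_1} g(z_1) g(z_2) G(z_1 - z_2)\, dz_1\, dz_2 \\
&\geq \iint_{0 < z_2 < z_1} g(z_1) g(z_2) [G(z_1) - G(z_2)]\, dz_1\, dz_2,
\end{align*}
and since $G(Z_i) \sim \mathrm{Uniform}(0, 1)$, the two pieces evaluate to $\EE[G(Z_1)^2] = 1/3$ and $\EE[G(Z_2)(1 - G(Z_2))] = 1/6$ respectively, giving $\PP(Z_1 > Z_2 + Z_3) \geq 1/6$. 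For $X \sim \mathrm{Uniform}(-1, 1)$, $G(z) = z$ on $[0, 1]$ makes the subadditivity an equality throughout, so the bound is attained and $p_X = q_X = 1/96$.

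For the strict lower bound $p_X > 0$, I fix any $a > 0$ in the support of $F$ (which exists since $X$ is continuous and nondegenerate) and any $\eps < a/3$; an outcome with $X_1, X_2, X_3 \in (a - \eps, a + \eps)$ and $X_1 < X_2 < X_3$ automatically satisfies $X_3 < a + \eps < 2(a - \eps) \leq X_1 + X_2$, so $p_X \geq \tfrac{1}{6}[F(a + \eps) - F(a - \eps)]^3 > 0$. Sharpness follows by letting $X$ have heavy symmetric unimodal tails (e.g.\ density proportional to $(1 + |x|)^{-\alpha - 1}$ with $\alpha \to 0^+$), for which $\PP(Z_1 > Z_2 + Z_3) \to 1/3$ and hence $p_X \to 0$. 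The main technical obstacle is extracting the sharp constant $1/6$: applying subadditivity pointwise to $\PP(Z_1 > Z_2 + Z_3) = 1 - \EE[G(Z_2 + Z_3)]$ yields only the trivial lower bound $0$, and the tight bound emerges only after integrating out $Z_3$ to introduce the kernel $G(z_1 - z_2)$ and then applying the equivalent form $G(z_1 - z_2) \geq G(z_1) - G(z_2)$, which precisely matches the extremal uniform distribution.
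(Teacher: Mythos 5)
Your proof is correct and, at its core, takes the same route as the paper's: both arguments hinge on the subadditivity of $G(x)=2F(x)-1$ on $[0,\infty)$ (which you correctly identify as equivalent to the defining property of $\mathscr{U}$) and on the probability integral transform $G(Z)\sim\mathrm{Uniform}(0,1)$ to reduce the claim $q\geq 1/96$ to an explicit integral equal to $1/6$, with the uniform distribution as the extremal case. The only real difference in the main bound is the order of integration: the paper writes $16q=\iint\{1-G(x+y)\}\,dG(x)\,dG(y)$ and bounds the integrand below by $\{1-G(x)-G(y)\}^+$, whereas you integrate out $Z_3$ first and apply the dual form $G(z_1-z_2)\geq G(z_1)-G(z_2)$ on $\{z_2<z_1\}$; these are the same inequality in mirror image, and your evaluation $1/3-1/6=1/6$ matches the paper's $\iint(1-u-v)^+\,du\,dv=1/6$. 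Two small remarks. First, you assume a density $g=G'$ exists, but membership in $\mathscr{U}$ requires only that $F$ be continuous; your computation survives verbatim with $dG(z)$ in place of $g(z)\,dz$ (as the paper writes it), so nothing is lost. Second, for sharpness of the lower bound $p>0$ you assert, for the family with density proportional to $(1+|x|)^{-\alpha-1}$, that $\PP(Z_1>Z_2+Z_3)\to 1/3$ as $\alpha\to 0^+$ without verification; the claim is true (writing $Z=(1-U)^{-1/\alpha}-1$ one checks that $(1+Z_2+Z_3)^{-\alpha}$ is squeezed between $2^{-\alpha}\min(U',V')$ and $\min(U',V')$ for independent uniforms $U',V'$, so the expectation tends to $1/3$), but as written it is the one step left unproved. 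The paper instead takes $G(x)=x^{\delta}$ on $[0,1]$ with $\delta\to 0$ and carries out the corresponding limit explicitly; either family works.
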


\begin{theorem} \label{thm:three-step-relative-ranks}
Let $n=3$. In the relative ranks version of the problem, the optimal rule $\tau^*$ is as follows: 
\begin{enumerate}[a)]
\item If $p \leq q$ (in particular, if $X \in \mathscr{U}$), then 
\[
\tau^* = \begin{cases} 
      1 & \text{if $S_1 > 0$}, \\
      2 & \text{if $S_1 \leq 0$ and $S_2>S_1$}, \\
      3 & \text{otherwise}. 
   \end{cases}
\]
\item If instead $p > q$, then 
\[
\tau^* = \begin{cases} 
      1 & \text{if $S_1 > 0$}, \\
      2 & \text{if $S_1 \leq 0 < S_2$}, \\
      3 & \text{otherwise}.
   \end{cases}
\]
\end{enumerate}
The optimal expected rank is 
\[
\EE(R_{\tau ^*}) = \min\left\{\frac{55}{24}, \frac{109}{48} + 2p\right\}.
\]
Further, 
\[
2.2708\approx\frac{109}{48} < \EE(R_{\tau ^*}) \leq \frac{55}{24}\approx 2.2917, 
\]
and these bounds are sharp.
\end{theorem}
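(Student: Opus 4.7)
The plan is backward induction. Writing $V_3 = R_3$ and, for $k \leq 2$, $V_k = \min\bigl(\EE(R_k \mid \GG_k),\,\EE(V_{k+1} \mid \GG_k)\bigr)$, the optimal expected rank equals $V_0$. A useful first simplification is the identity
\[
\EE(R_k \mid \GG_k) \;=\; \tilde R_k + \tfrac{n-k}{2},
\]
valid for every $k$: the contribution of each $i > k$ to $R_k$ equals $\PP(S_k \leq S_i \mid \GG_k) = 1/2$, since $S_i - S_k$ is symmetric and independent of $\GG_k$, while the $i \leq k$ terms contribute exactly $\tilde R_k$ (counting the trivial self-term). For $n = 3$ this gives $\EE(R_0) = 5/2$, $\EE(R_1 \mid \tilde R_1 = j) = j+1$, and $\EE(R_2 \mid \tilde R_2 = j) = j + 1/2$.

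The main work is evaluating $\EE(R_3 \mid \GG_2)$ in each of the six cells $(\tilde R_1, \tilde R_2) \in \{1,2\} \times \{1,2,3\}$. Using $\PP(S_3 \leq S_2 \mid \GG_2) = 1/2$, this reduces to computing $\PP(S_3 \leq 0 \mid \GG_2)$ and $\PP(S_3 \leq S_1 \mid \GG_2)$. I would do this by enumerating all $24$ permutations $\pi = (R_0, R_1, R_2, R_3)$ of $(1,2,3,4)$, each corresponding to an explicit event in $(X_1, X_2, X_3)$; for instance $\pi = (4,2,1,3)$ corresponds to $X_1, X_2 > 0$, $X_3 < 0$, and $X_2 < -X_3 < X_1 + X_2$. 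The two symmetries $(X_1, X_2, X_3) \stackrel{d}{=} (-X_1,-X_2,-X_3)$ and $(X_1, X_2, X_3) \stackrel{d}{=} (X_3,X_2,X_1)$ partition the $24$ permutations into $8$ orbits. Four orbits carry distribution-free probabilities ($1/8$, $1/16$, $1/24$, or $1/48$ per representative) that follow from direct integration using only $F(0) = 1/2$; the remaining four orbits have probabilities equal to linear combinations of $p$, $q$, and $1/48$, obtained after the substitution $Y = -X_3$, which rewrites their defining events as $\{0 < Y_1 < Y_2 < Y_3 < Y_1 + Y_2\}$ or $\{0 < Y_1 < Y_2 < Y_1 + Y_2 < Y_3\}$ with $Y_1, Y_2, Y_3$ i.i.d.\ from $F$.

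With this table in hand, the six decision comparisons at time 2 become explicit. Four cells decide unambiguously: stop if $\tilde R_2 = 1$ and continue if $\tilde R_2 = 3$. The two cells with $\tilde R_2 = 2$ reduce to comparing $5/2$ with $7/3 + 16p$ (if $\tilde R_1 = 1$) or $7/3 + 16q$ (if $\tilde R_1 = 2$), that is, to the sign of $p-q$; this produces the case split (a)/(b) of the theorem. Propagating to time 1 one verifies that $V_1(\tilde R_1 = 1) = 2$, since its continuation value ($17/8$ in case (a) and $25/12 + 4q$ in case (b)) always exceeds $2$; hence stopping at time 1 is optimal iff $\tilde R_1 = 1$. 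Computing $V_1(\tilde R_1 = 2)$ (equal to $21/8 - 4q$ in case (a), $31/12$ in case (b)), then averaging and simplifying using $p + q = 1/48$, yields $V_0 = \min\bigl(55/24,\,109/48 + 2p\bigr)$. The bounds $109/48 < V_0 \leq 55/24$ follow from $0 < p < 1/48$: the upper bound is attained for any $X$ with $p \geq 1/96$, for example $X$ uniform on $(-2,-1) \cup (1,2)$, for which $p = 1/48$ and $q = 0$; the lower bound is approached as $p \to 0^+$, which can be arranged with sufficiently heavy-tailed symmetric $X$.

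The principal obstacle is the enumeration step: correctly tabulating the $24$ ordering probabilities, identifying the eight-orbit structure produced by the two symmetries, and matching the four non-trivial orbits' probabilities to $p$, $q$, or $2p + 1/48$. Once that table is in place, all subsequent steps reduce to direct algebra.
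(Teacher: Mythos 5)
Your proposal follows essentially the same route as the paper: backward induction on the relative-rank filtration, with the 24 ordering probabilities of $(S_0,S_1,S_2,S_3)$ expressed through $p$ and $q$, leading to the same stop/continue comparisons at each node (in particular the comparison of $5/2$ with $7/3+16p$, resp.\ $7/3+16q$, that produces the case split, and the same terminal values $109/48+2p$ and $55/24$); the only piece you describe without writing out is the probability table itself, but every derived quantity you state is consistent with it. One remark: your continuation values at time 1 given $S_1>0$, namely $17/8$ in case (a) and $25/12+4q$ in case (b), differ by $1/48$ from the paper's $103/48$ and $101/48+4q$; on rechecking, your values are the correct ones --- the paper's Case 3 value $\EE(R_3\mid S_2<0<S_1)=\tfrac{35}{12}+16q$ should read $\tfrac{17}{6}+16q$ --- but since both versions exceed $2$, the conclusion (stop at time 1 when $S_1>0$) and the final formula $\min\{55/24,\,109/48+2p\}$ are unaffected.
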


Note that the optimal rule and expected rank only depend on the distribution of $X$ through the parameter $p$, and remain constant once $p$ exceeds $1/96$.



\section{Proofs for the full information case} \label{sec:proofs-full}

We begin by making a few simple observations:
\begin{enumerate}
\item If we stop at time $k$, then our expected overall rank in either version of the problem is
\[
\EE(R_k|\FF_k)=\EE(R_k|\GG_k)=\tilde{R}_k+\frac{n-k}{2},
\]
by the symmetry of the walk.
\item When we are at a running minimum of the random walk, it is always optimal to continue. That is, if at time $0\leq k<n$ we have $\tilde{R}_k=k+1$ (or equivalently, $S_k\leq S_i$ for all $i\leq k$), then in view of the first observation above,
\begin{align*}
\EE(R_k|\FF_k)&=k+1+\frac{n-k}{2}\\
&=\frac{1}{2}\left(k+2 + \frac{n-k-1}{2}\right) + \frac{1}{2}\left(k+1 + \frac{n-k-1}{2}\right)\\
&\geq \EE(R_{k+1}|\FF_k).
\end{align*}
Thus, continuing one more step and then stopping is at least as good as stopping immediately. This holds also when we replace $\FF_k$ by $\GG_k$.
\item If we are at time $n-1$ having observed $S_1,\dots,S_{n-1}$, and if we choose to continue to the $n$th (and last) step, our expected rank is
\begin{align*}
\EE(R_n|S_1,\dots,S_{n-1})&=\sum_{i=0}^{n}\PP(S_n\leq S_i|S_1,\dots,S_{n-1})\\
&=1+\sum_{i=0}^{n-1}\PP(X_n\leq S_i-S_{n-1}|S_1,\dots,S_{n-1})\\
&=1+\sum_{i=0}^{n-1}F(S_i-S_{n-1})\\
&=n+\frac12-\sum_{i=0}^{n-2}F(X_{i+1}+\dots+X_{n-1}),
\end{align*}
where the last step uses the symmetry of $F$. Note that this observation applies only to the full information version.
\end{enumerate}

\begin{proof}[Proof of Theorem \ref{thm:two-step}]
Let $n=2$, and consider first the full information case. If we take the first step and $S_1\leq 0$, we should continue by observation 2 above, and our expected rank is $\EE(R_2|S_1)=2.5-F(X_1)$ by observation 3 above. Suppose $S_1>0$ instead. Then $\EE(R_2|S_1)=2.5-F(X_1)\geq 1.5=\EE(R_1|S_1)$, so we should stop, with expected rank $1.5$. Thus, when we take at least the first step, the optimal expected rank is
\begin{equation*}
\frac12(1.5)+\int_{-\infty}^0 \{2.5-F(x)\}dF(x)=2-\int_0^{1/2} u du=\frac{15}{8}.
\end{equation*}
As this is less than 2 (the expected rank of $S_0$), we should take the first step and our optimal expected rank is $15/8$. This shows that the optimal rule is as stated in the theorem. Since $\tau^*$ uses only the relative rank of $S_1$ (that is, the comparison of $S_1$ to $0=S_0$), it follows that this rule is optimal in the relative ranks version as well.
\end{proof}

We next consider the case $n=3$. Here we use backward induction to determine the optimal rule. First, we define the quantities
\begin{gather}
V_i(X_1,\dots,X_i):=\inf_{i\leq\tau\leq 3} \EE(R_\tau|\FF_i), \qquad i=0,1,2, \label{eq:V}\\
W_i(X_1,\dots,X_i):=\inf_{i<\tau\leq 3} \EE(R_\tau|\FF_i), \qquad i=0,1,2, \label{eq:W}
\end{gather}
where in each case, the infimum is over the set of all stopping times relative to the filtration $\{\FF_i\}$ that take values in the specified range. In case $i=0$, we write simply $V_0$ and $W_0$ for the quantities on the left. We also denote $V_0$ by $V$, and note that $V=\inf_\tau \EE(R_\tau)$. Observe that
\begin{equation}
V_i(X_1,\dots,X_i)=\min\{\EE(R_i|\FF_i),W_i(X_1,\dots,X_i)\}, \qquad i=0,1,2,
\label{eq:stop-or-continue}
\end{equation}
and
\begin{equation}
W_i(X_1,\dots,X_i)=\int_{-\infty}^\infty V_{i+1}(X_1,\dots,X_i,x)\,dF(x), \qquad i=0,1.
\label{eq:backward-recursion}
\end{equation}

Starting one step before the end of the random walk, suppose $X_1$ and $X_2$ (or equivalently, $S_1$ and $S_2$) have been observed. If $\tilde{R}_2=3$, then the walk is at a running minimum, so it is optimal continue, and by observation 3,
\[
V_2(X_1,X_2)=W_2(X_1,X_2)=3.5-F(X_2)-F(X_1+X_2).
\]
If, on the other hand, $\tilde{R}_2=1$, then it is optimal to stop, since $\EE(R_2|\FF_2)=1.5\leq 3.5-F(X_2)-F(X_1+X_2)=W_2(X_1,X_2)$. So in this case, $V_2(X_1,X_2)=1.5$. 

The interesting case is when $\tilde{R}_2=2$. This can happen in two different ways: (i) $X_1>0$ and $-X_1<X_2<0$; or (ii) $X_1<0$ and $0<X_2<-X_1$. Note that in either case, $\EE(R_2|\FF_2)=2.5$, which must be compared to $W_2(X_1,X_2)=3.5-F(X_2)-F(X_1+X_2)=2.5+F(-X_2)-F(X_1+X_2)$. Thus it is optimal to stop if $F(-X_2)\geq F(X_1+X_2)$ and to continue otherwise; in other words, it is optimal to stop if $X_2\leq -X_1/2$, and to continue otherwise. (Note that there could be a region of points $(X_1,X_2)$ for which we are indifferent between stopping and continuing; this is why we do not say ``it is optimal to stop if and only if $X_2\leq -X_1/2$".)

Putting these observations together, we see that, if we had not yet stopped before, it is optimal to stop at time 2 if one of the following holds:
\begin{enumerate}
\item $X_1>0$ and $X_2>0$; or
\item $X_1>0$ and $-X_1<X_2\leq -X_1/2$; or
\item $X_1<0$ and $X_2>-X_1$; or
\item $X_1<0$ and $0<X_2\leq -X_1/2$;
\end{enumerate}
and to continue otherwise. Moreover,
\begin{equation}
V_2(X_1,X_2)=\min\{\tilde{R}_2+0.5,1.5-F(-X_2)-F(-X_1-X_2)\}.
\end{equation}

From the above facts, we can compute the optimal expected rank if we continue after the first step, that is, after observing $X_1=x$: For $x>0$, we have
\begin{align*}
W_1(x)&= 1.5 \PP(X_2 > 0) + \int_{-x/2}^0 \{1.5 + F(-x-y)+F(-y)\} dF(y) \\
& \hspace{1cm} + 2.5\PP\left(-x < X_2 < -\frac{x}{2}\right) + \int_{-\infty}^{-x} \{1.5 + F(-x-y) + F(-y)\} dF(y) \\ 
&= 1.5 + \int_{-x/2}^0 \{F(-x - y) + F(-y)\} dF(y) \\
& \hspace{1cm} + \PP\left(-x < X_2 < -\frac{x}{2}\right) + \int_{-\infty}^{-x} \{F(-x -y) + F(-y)\} dF(y).
\end{align*}
Using the symmetry of $f$, we can simplify this by noting that
\begin{equation}
\int_{-b}^{-a} F(-y)dF(y)=\int_a^b F(z)dF(z)=\frac12[F^2(b)-F^2(a)]
\label{eq:symmetry-in-integral1}
\end{equation}
and
\begin{equation}
\int_{-b}^{-a} F(-x-y)dF(y)=\int_a^b F(z-x)dF(z)
\label{eq:symmetry-in-integral2}
\end{equation}
for $0\leq a<b\leq\infty$; and by using also that $F(0)=1/2$. This gives, for $x>0$,
\begin{align}
\begin{split}
W_1(x)&= \frac{15}{8} + \int_0^{x/2} F(y-x)dF(y)+\int_x^\infty F(y-x)dF(y) \\
& \hspace{1cm} + F(x)-F\left(\frac{x}{2} \right) -\frac{1}{2}\left[F^2(x)-F^2 \left(\frac{x}{2}\right) \right].
\end{split}
\label{eq:W1-expression}
\end{align}
We must compare this with $\EE(R_1|\FF_1)=2$. Taking limits under the integral signs in \eqref{eq:W1-expression} (which is justified by dominated convergence), we obtain
\[
\lim_{x\to\infty}W_1(x)=\frac{15}{8}
\]
(which, not coincidentally, is equal to the value of the $2$-step problem); and
\[
\lim_{x\searrow 0}W_1(x)=\frac{15}{8}+\int_0^\infty F(y)dF(y)=\frac{15}{8}+\int_{1/2}^1 u du=\frac{9}{4}.
\]
Since $W_1(x)$ is clearly continuous, it follows that there is a critical point $x_1^*>0$ (not necessarily unique) such that $W_1(x_1^*)=2$. Since $W_1(x)$ is also nonincreasing in $x$ (an immediate consequence of the definition), we conclude that it is optimal to stop at time $1$ if $0<X_1\leq x_1^*$, and to continue if $X_1>x_1^*$. (Intuitively, if $X_1$ is very large, one should continue since the risk of falling back below the starting point of $0$ is very small.) Note that $x_1^*$ is a solution of the equation
\begin{align}
\begin{split}
\int_0^{x/2} F(y-x)dF(y) &+ \int_x^\infty F(y-x)dF(y) \\
& + F(x)-F\left(\frac{x}{2} \right)-\frac{1}{2}\left[F^2(x)-F^2 \left(\frac{x}{2}\right) \right]=\frac18.
\end{split}
\label{eq:x1-equation}
\end{align}

As shown below, $x_1^*$ is always a high quantile of the distribution of $X_1$.

\begin{lemma} \label{lem:quantile-estimate}
We have 
\[
F(x_1^*)\geq \frac12+\frac{\sqrt{2}}{4}\approx .85355.
\]
\end{lemma}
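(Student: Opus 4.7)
The plan is to establish the pointwise lower bound $\Phi(x) \geq F(x)(1 - F(x))$, where $\Phi(x)$ denotes the left-hand side of equation \eqref{eq:x1-equation}. Once we have this, the conclusion follows from a quadratic inequality: at $x = x_1^*$ we have $\Phi(x_1^*) = 1/8$, so $F(x_1^*)(1 - F(x_1^*)) \leq 1/8$, and since $x_1^* > 0$ forces $F(x_1^*) \geq 1/2$, the larger root $\tfrac{1}{2} + \tfrac{\sqrt{2}}{4}$ of $b(1-b) = \tfrac{1}{8}$ must be a lower bound for $F(x_1^*)$.

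To prove the bound, I would set $a := F(x/2)$ and $b := F(x)$, noting $\tfrac12 \leq a \leq b \leq 1$, and bound each of the three pieces of $\Phi(x)$ from below using only symmetry and monotonicity of $F$. For the first integral, every $y \in (0, x/2)$ satisfies $y - x \in (-x, 0)$, so $F(y - x) \geq F(-x) = 1 - b$, giving $\int_0^{x/2} F(y-x)\,dF(y) \geq (1-b)(a - \tfrac12)$. For the second integral, $y > x$ implies $y - x > 0$ and hence $F(y-x) \geq F(0) = \tfrac12$, so $\int_x^\infty F(y-x)\,dF(y) \geq \tfrac12(1 - b)$. For the remaining algebraic term, $a \leq b$ yields $1 - (a+b)/2 \geq 1 - b$, so $(b - a)(1 - (a+b)/2) \geq (b-a)(1-b)$.

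The crucial observation is that when these three lower bounds are summed, the common factor $(1 - b)$ factors out and the bracketed sum telescopes: $(a - \tfrac12) + \tfrac12 + (b - a) = b$. This gives $\Phi(x) \geq b(1-b)$ exactly. The rest is elementary: $b(1-b)$ is decreasing on $[1/2, 1]$, from $1/4$ down to $0$, and equals $1/8$ precisely at $b = \tfrac12 + \tfrac{\sqrt 2}{4}$, so $\Phi(x_1^*) = 1/8$ together with $F(x_1^*) \geq 1/2$ forces $F(x_1^*) \geq \tfrac12 + \tfrac{\sqrt 2}{4}$.

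The argument is only a few lines of calculation; there is no serious obstacle. The main thing to spot is the telescoping that makes the three crude term-by-term bounds assemble into exactly $b(1-b)$, matching the target value $1/8$ at the claimed threshold.
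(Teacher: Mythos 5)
Your proposal is correct and follows essentially the same route as the paper: both use $F(y-x)\geq F(-x)$ on the first integral and $F(y-x)\geq F(0)=\tfrac12$ on the second, arrive at the bound $W_1(x)-\tfrac{15}{8}\geq F(x)\{1-F(x)\}$, and finish with the same quadratic inequality at $F(x_1^*)\geq\tfrac12$. The only cosmetic difference is that the paper keeps the algebraic term exact and discards a leftover $\tfrac12[F(x)-F(x/2)]^2\geq 0$ after completing the square, whereas you bound that term by $(b-a)(1-b)$ first and let the sum telescope to $b(1-b)$.
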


\begin{proof}
In \eqref{eq:W1-expression}, we use $F(y-x)\geq F(-x)$ in the first integral and $F(y-x)\geq F(0)=1/2$ in the second to obtain (using the symmetry of $F$),
\begin{align}
\begin{split}
W_1(x)&\geq \frac{15}{8}+F(x)-\frac12 F^2(x)-F(x)F\left(\frac{x}{2}\right)+\frac12 F^2\left(\frac{x}{2}\right)\\
&=\frac{15}{8}+F(x)-F^2(x)+\frac12\left[F(x)-F\left(\frac{x}{2}\right)\right]^2\\
&\geq \frac{15}{8}+F(x)\{1-F(x)\}.
\end{split}
\label{eq:W1-lower-bound}
\end{align}
Setting $W_1(x_1^*)=2$ thus yields $F(x_1^*)\{1-F(x_1^*)\}\leq 1/8$, and since we know $F(x_1^*)\geq F(0)=1/2$, it follows that $F(x_1^*)\geq \frac12+\frac{\sqrt{2}}{4}$.
\end{proof}

We now consider the case when $X_1=x<0$. Here it is always optimal to continue as we are at a running minimum, and
\begin{align*}
W_1(x)&= 1.5\PP(X_2>-x) + \int_{-x/2}^{-x} \{1.5 + F(-y) + F(-x - y)\} dF(y) \\ 
& \hspace{1cm} + 2.5 \PP(0 < X_2 < -x/2) + \int_{-\infty}^0 \{1.5 + F(-y) + F(-x - y)\} dF(y) \\
&= 1.5 + \int_{-x/2}^{-x} \{F(-y) + F(-x - y)\} dF(y) \\ 
& \hspace{1cm} + \PP(0 < X_2 < -x/2) + \int_{-\infty}^0 \{F(-y) + F(-x - y)\} dF(y).
\end{align*}
Using \eqref{eq:symmetry-in-integral1} and \eqref{eq:symmetry-in-integral2}, this leads to
\begin{align}
\begin{split}
W_1(x)&= \frac{19}{8} + \int_{x}^{x/2} F(y-x) dF(y) + \int_0^{\infty} F(y-x) dF(y)\\ 
& \hspace{1cm} - F\left(\frac{x}{2} \right) + \frac{1}{2} \left[F^2\left(\frac{x}{2}\right) - F^2(x) \right].
\end{split}
\label{eq:W1-expression-negative}
\end{align}

\begin{proof}[Proof of Theorem \ref{thm:three-step-full}]
We have already determined the optimal rule and shown, in Lemma \ref{lem:quantile-estimate}, that $F(x_1^*)\geq \frac12+\frac{\sqrt{2}}{4}$. It remains to prove the estimates \eqref{eq:V-estimates} and to show that the upper bound is attained.

First we estimate $W_1(x)$ by a simpler expression for $x<0$. In \eqref{eq:W1-expression-negative}, use $F(y-x)\geq F(0)=1/2$ in the first integral to get
\[
\int_x^{x/2} F(y-x)dF(y) \geq \frac12\left[F\left(\frac{x}{2}\right)-F(x)\right].
\]
The second integral we estimate as follows:
\begin{align*}
\int_0^{\infty} F(y-x) dF(y) &\geq \int_0^{-x}F(-x)dF(y)+\int_{-x}^\infty F(y)dF(y)\\
&=F(-x)\left\{F(-x)-\frac12\right\}+\frac12\left\{1-F^2(-x)\right\}\\
&=\frac12\big(1-F(-x)\{1-F(-x)\}\big)\\
&=\frac12\big(1-F(x)\{1-F(x)\}\big).
\end{align*}
Putting these estimates back into \eqref{eq:W1-expression-negative} yields
\begin{equation}
W_1(x)\geq \frac{23}{8}-F(x)-\frac12 F\left(\frac{x}{2}\right)+\frac12 F^2\left(\frac{x}{2}\right).
\label{eq:W1-lower-estimate}
\end{equation}
Note that $V_1(x)=2$ if $0<x\leq x_1^*$, and $V_1(x)=W_1(x)$ otherwise.
At time $0$ it is optimal to continue since the walk is at a minimum. Hence,
\begin{align}
V&=\int_{-\infty}^\infty V_1(x)dF(x) \notag \\
&=\int_{-\infty}^0 W_1(x)dF(x)+2\PP(0<X_1\leq x_1^*)+\int_{x_1^*}^\infty W_1(x)dF(x). \label{eq:three-terms}
\end{align}
We estimate each integral separately. First, by \eqref{eq:W1-lower-estimate},
\begin{align*}
\int_{-\infty}^0 W_1(x)dF(x) &\geq \frac{23}{16}-\int_{-\infty}^0 F(x)dF(x)
-\frac12 \int_{-\infty}^0 F\left(\frac{x}{2}\right)\left\{1-F\left(\frac{x}{2}\right)\right\}dF(x)\\
&\geq \frac{23}{16}-\int_0^{1/2}u du-\frac18 \int_{-\infty}^0 dF(x)
=\frac{5}{4}.
\end{align*}
The other integral can be estimated below using \eqref{eq:W1-lower-bound}, which gives
\begin{align*}
\int_{x_1^*}^\infty W_1(x)dF(x) &\geq \int_{x_1^*}^\infty \left[\frac{15}{8}+F(x)\{1-F(x)\}\right]dF(x)\\
&=\frac{15}{8}\{1-F(x_1^*)\}+\frac16-\frac12 F^2(x_1^*)+\frac13 F^3(x_1^*).
\end{align*}
Combining this with the second term in \eqref{eq:three-terms} yields
\begin{align*}
\int_0^\infty V_1(x)dF(x)&=2\left(F(x_1^*)-\frac12\right)+\int_{x_1^*}^\infty W_1(x)dF(x)\\
&\geq \frac{25}{24}+\frac{1}{24}\left\{3F(x_1^*)-12F^2(x_1^*)+8F^3(x_1^*)\right\}\\
&\geq \frac{25}{24}-\frac{1+\sqrt{2}}{48}.
\end{align*}
To see the last inequality, let $g(t)=3t-12t^2+8t^3$ and note that $g$ has a local minimum value of $-\frac{1+\sqrt{2}}{2}$ at $t=\frac12+\frac{\sqrt{2}}{4}$, so the cubic polynomial in $F(x_1^*)$ is minimized exactly when $F(x_1^*)=\frac12+\frac{\sqrt{2}}{4}$.

Combining the estimates, we finally arrive at
\[
V\geq \frac{5}{4}+\frac{25}{24}-\frac{1+\sqrt{2}}{48}=\frac{109-\sqrt{2}}{48}\approx 2.24137.
\]

In the next section we will show that $55/24$ is a sharp upper bound for $\EE(R_{\tau^*})$ in the relative ranks version of the problem. Since in the full information version we can do at least as well, it follows that $V\leq 55/24$. This is attained (in both versions of the problem) when $X$ has the uniform distribution on $(-2,-1)\cup(1,2)$; we leave the details to the interested reader. Thus, the proof is complete.
\end{proof}

\begin{examples}
{\rm
{\em (a)} Let $X$ have the uniform distribution on $(-1,1)$. Then for $0<x<1$ we have
\[
W_1(x)=\frac{9}{4} - \frac{x}{4} - \frac{x^2}{16},
\]
so that $x_1^*=2(\sqrt{2}-1) \approx 0.828$; and for $-1<x<0$ we have
\[
W_1(x)=\frac{9}{4} - \frac{3x}{4} - \frac{3x^2}{16}.
\]
The optimal expected rank is
\begin{equation*}
V=\frac12\int_{-1}^0 W_1(x)dx+\frac12\cdot 2x_1^*+\frac12\int_{x_1^*}^1 W_1(x)dx=\frac{11}{4}-\frac{\sqrt{2}}{3}\approx 2.279.
\end{equation*}

{\em (b)} Let $X$ have the standard two-sided exponential (or Laplace) distribution, with density $f(x)=\frac12 e^{-|x|}$ for $x\in\RR$. Then for $x>0$ we have
\[
W_1(x)=\frac{15}{8} + \frac{1}{8} xe^{-x} + \frac{1}{2} e^{-x} - \frac{1}{8} e^{-2x},
\]
and numerically solving $W_1(x)=2$ gives $x_1^*\approx 1.71$. For $x<0$ we have
\[
W_1(x)=\frac{23}{8} + \frac{1}{8} x e^{x} - \frac{1}{2} e^{x} - \frac{1}{8} e^{2x}.
\]
The optimal expected rank is approximately $2.271$.
}
\end{examples}

\section{Proofs for the relative ranks case} \label{sec:proofs-relative}

In this section we assume that only the relative ranks $\tilde{R}_i$, $i=0,1,\dots,n$ are observed. We fix $n=3$. Before deriving the optimal rule, we prove Proposition \ref{prop:p-bounds-unimodal}. In what follows, $|X|_{(1)},|X|_{(2)}$ and $|X|_{(3)}$ denote the order statistics of $|X_1|,|X_2|$ and $|X_3|$, so $(|X|_{(1)},|X|_{(2)},|X|_{(3)})$ is a permutation of $(|X_1|,|X_2|,|X_3|)$ with $|X|_{(1)}\leq |X|_{(2)}\leq |X|_{(3)}$. We let $G$ denote the distribution function of $|X_1|$, so $G(x)=2F(x)-1$ for $x\geq 0$.

\begin{proof}[Proof of Proposition \ref{prop:p-bounds-unimodal}]
It is easy to see, for {\em any} continuous symmetric distribution, that $p>0$. For instance, choose $x_0>0$ so that $F(2x_0)>F(x_0)$; such a point certainly exists. Then, by symmetry (since $X_1,X_2,X_3$ are i.i.d.),
\begin{align*}
48p&=\PP\big(|X|_{(3)}<|X|_{(1)}+|X|_{(2)}\big) \geq \PP(x_0<X_i\leq 2x_0\ \mbox{for}\ i=1,2,3)\\
&=\big(F(2x_0)-F(x_0)\big)^3>0.
\end{align*}
Now assume $X\in\mathscr{U}$; this implies $G(x+y)\leq G(x)+G(y)$ for all $x,y\geq 0$. We calculate
\begin{align*}
16q&=\frac{1}{3}\PP\big(|X|_{(3)}>|X|_{(1)}+|X|_{(2)}\big)=\PP\big(|X_3|>|X_1|+|X_2|\big)\\
&=\int_0^\infty \int_0^\infty\{1-G(x+y)\}dG(x)dG(y)\\
&\geq\int_0^\infty \int_0^\infty{\{1-G(x)-G(y)\}}^+dG(x)dG(y)\\
&=\int_0^1 \int_0^1 (1-u-v)^+ du dv=\frac{1}{6}.
\end{align*}
The only inequality in this calculation becomes an equality when $X$ is uniformly distributed on $(-1,1)$. Thus, $p\leq 1/96$, and this bound is attained for the uniform distribution on $(-1,1)$. 

It remains to show that the lower bound $p>0$ is sharp. To this end, let $G(x)=x^\delta$ for $0\leq x\leq 1$ and $\delta>0$. (This corresponds with a density $f(x)=\frac{\delta}{2}|x|^{\delta-1}$ for $x\in(-1,1)\backslash\{0\}$.) For $0\leq u\leq v\leq 1$, we have
\[
G\big(G^{-1}(u)+G^{-1}(v)\big)\leq G\big(2G^{-1}(v)\big)=G\big(2v^{1/\delta}\big)\leq 2^\delta v.
\]
Hence, for this $G$,
\begin{align*}
16q&=\int_0^\infty \int_0^\infty\{1-G(x+y)\}dG(x)dG(y)\\
&=\int_0^1 \int_0^1 \left\{1-G\big(G^{-1}(u)+G^{-1}(v)\big)\right\}du dv\\
&\geq \int_0^1 \int_0^1 \left(1-2^\delta\max\{u,v\}\right)du dv\\
&\to \int_0^1 \int_0^1 (1-\max\{u,v\})du dv=\frac13 \qquad\mbox{as $\delta\to 0$}.
\end{align*}
Thus $q$ gets arbitrarily close to $1/48$, and $p=(1/48)-q$ gets arbitrarily close to $0$, for sufficiently small $\delta>0$.
\end{proof}

We will use the parameters $p$ and $q$ to express the probabilities of all 24 possible rank orderings of $S_0,S_1,S_2$ and $S_3$. Let
\[
\Delta:=\{|X|_{(3)}<|X|_{(1)}+|X|_{(2)}\}
\]
be the event that the absolute step sizes satisfy the triangle inequality, and note that $\PP(\Delta)=48p$. Define the $\sigma$-algebras
\begin{gather*}
\mathscr{D}:=\sigma(\Delta), \qquad \mathscr{S}:=\sigma(\{\sign(X_i): i=1,2,3\}),\\
\mathscr{C}:=\sigma(\{\sign(|X_i|-|X_j|): i,j=1,2,3, i\neq j\}).
\end{gather*}
That is, $\mathscr{D}$ is the $\sigma$-algebra generated by $\Delta$, $\mathscr{S}$ is the $\sigma$-algebra generated by the signs of $X_1,X_2$ and $X_3$, and $\mathscr{C}$ is the $\sigma$-algebra generated by the mutual comparisons of $|X_1|,|X_2|$ and $|X_3|$. Observe that the $\sigma$-algebras $\mathscr{C},\mathscr{D}$ and $\mathscr{S}$ are independent by the symmetry of $X$. This makes it easy to calculate the probabilities of the 24 permutations of the random walk. For example,
\begin{align*}
\PP(0>S_3 &> S_2 > S_1) \\
&= \PP\big(\{X_1<0, X_2 > 0, X_3 > 0\} \cap \{|X_1| > |X_2|, |X_1| > |X_3|\} \cap \Delta^{c}\big) \\
&= \PP(X_1<0, X_2 > 0, X_3 > 0)\, \PP(|X|_{(3)}=|X_1|)\, \PP(\Delta^{c}) \\
&= \frac18 \cdot \frac13 \cdot 48q = 2q.
\end{align*}
The other probabilities can be derived similarly; we list them in Table \ref{tab:permutations}.

\begin{table}
\begin{center}
\renewcommand{\arraystretch}{1.25}
\begin{tabular}{|c|c|c|}
\hline
Permutation & Reflection & Probability \\ \hline
$0>S_1 > S_2 > S_3$ & $0<S_1<S_2<S_3$ & $1/8$\\
$0>S_1 > S_3 > S_2$ & $0<S_1<S_3<S_2$ & $1/16$\\
$0>S_2 > S_1 > S_3$ & $0<S_2<S_1<S_3$ & $1/24$\\
$0>S_2 > S_3 > S_1$ & $0<S_2<S_3<S_1$ & $1/48$\\
$0>S_3 > S_1 > S_2$ & $0<S_3<S_1<S_2$ & $(1/48)+2p$\\
$0>S_3 > S_2 > S_1$ & $0<S_3<S_2<S_1$ & $2q$\\
$S_2 > 0 > S_1 > S_3$ & $S_2<0<S_1<S_3$ & $1/48$\\
$S_2 > 0 > S_3 > S_1$ & $S_2<0<S_3<S_1$ & $2p$\\
$S_2 > S_3 > 0 > S_1$ & $S_2<S_3<0<S_1$ & $2q$\\
$S_3 > 0 > S_1 > S_2$ & $S_3<0<S_1<S_2$ & $2q$\\
$S_3 > 0 > S_2 > S_1$ & $S_3<0<S_2<S_1$ & $(1/48)+2p$\\
$S_3 > S_2 > 0 > S_1$ & $S_3<S_2<0<S_1$ & $1/16$\\
\hline
\end{tabular}
\caption{The probabilities of the 24 possible rank orderings of $0=S_0,S_1,S_2$ and $S_3$. The first column lists permutations with $S_1<0$; the second lists the ones with $S_1>0$. By symmetry, the two permutations in each row have the same probability.} 
\label{tab:permutations}
\end{center}
\end{table}

\begin{proof}[Proof of Theorem \ref{thm:three-step-relative-ranks}]
Recall the filtration $\{\GG_i\}$ defined by $\GG_i=\sigma(\tilde{R}_1,\dots,\tilde{R}_i)$, $i=0,1,2,3$. Analogously to \eqref{eq:V} and \eqref{eq:W} we define
\begin{gather*}
\tilde{V}_i(\tilde{R}_1,\dots,\tilde{R}_i):=\inf_{i\leq\tau\leq 3} \EE(R_\tau|\GG_i), \qquad i=0,1,2,\\
\tilde{W}_i(\tilde{R}_1,\dots,\tilde{R}_i):=\inf_{i<\tau\leq 3} \EE(R_\tau|\GG_i), \qquad i=0,1,2.
\end{gather*}
In case $i=0$ we write simply $\tilde{V}_0$ and $\tilde{W}_0$ for the left hand sides, and we denote $\tilde{V}_0$ also by $\tilde{V}$. As in the full information case (cf.~\eqref{eq:stop-or-continue}),
\[
\tilde{V}_i(\tilde{R}_1,\dots,\tilde{R}_i)=\min\{\EE(R_i|\GG_i),\tilde{W}_i(\tilde{R}_1,\dots,\tilde{R}_i)\}.
\]

\medskip
{\em a)} Assume first that $p\leq q$; recall that this is the case for all unimodal distributions. Suppose $\tilde{R}_1$ and $\tilde{R}_2$ have been observed. Equivalently, the mutual comparisons between $0=S_0$, $S_1$ and $S_2$ are known. We consider the six possible permutations one by one:

\bigskip
{\em Case 1.} Suppose $0 < S_1 < S_2$. The probability of this event is $1/4$. Here $\EE(R_2|\GG_2)=1.5\leq\EE(R_3|\GG_2)$, since $R_3$ takes the value $1$ with (conditional) probability $1/2$, and otherwise takes at least the value $2$.
Thus, it is optimal to stop, and $\tilde{V}_2(\tilde{R}_1,\tilde{R}_2)=1.5$.

\bigskip
{\em Case 2.} Suppose $0 < S_2 < S_1$. This happens with probability $1/8$. Here $\EE(R_2|\GG_2)=2.5$, whereas
\[
\EE(R_3|\GG_2)=8\left[1\cdot \frac{1}{24}+2\cdot \frac{1}{48}+3\cdot 2q+4\cdot \left(2p+\frac{1}{48}\right)\right]=\frac{7}{3}+16p, 
\]
where we used the third, fourth, sixth and eleventh rows of Table \ref{tab:permutations}. Since $p\leq 1/96$,  it follows that $\EE(R_3|\GG_2)\leq 2.5$. Hence, it is optimal to continue, and $\tilde{V}_2(\tilde{R}_1,\tilde{R}_2)=\frac{7}{3}+16p$. 

\bigskip
{\em Case 3.} Suppose $S_2 < 0 < S_1$. This happens with probability $1/8$. Here the walk is at a minimum, so it is optimal to continue, and
\[
\tilde{V}_2(\tilde{R}_1,\tilde{R}_2)=\EE(R_3|\GG_2)=8\left[1\cdot \frac{1}{48}+2\cdot 2p+3\cdot 2q+4\cdot \frac{1}{16}\right]=\frac{35}{12}+16q.
\]

\medskip
{\em Case 4.} Suppose $S_1 < 0 < S_2$. This happens with probability $1/8$. Here the walk is at a maximum, so as in Case 1 it is optimal to stop, and $\tilde{V}_2(\tilde{R}_1,\tilde{R}_2)=1.5$.

\bigskip
{\em Case 5.} Suppose $S_1 < S_2 < 0$. This happens with probability $1/8$. Here $\EE(R_2|\GG_2)=2.5$, whereas
\[
\EE(R_3|\GG_2)=8\left[1\cdot \left(2p+\frac{1}{48}\right)+2\cdot 2q+3\cdot \frac{1}{48}+4\cdot \frac{1}{24}\right]=\frac{7}{3}+16q\geq 2.5,
\]
using that $q\geq 1/96$. Thus, it is optimal to stop, and $\tilde{V}_2(\tilde{R}_1,\tilde{R}_2)=2.5$.

\bigskip
{\em Case 6.} Suppose $S_2 < S_1 < 0$. This occurs with probability $1/4$. Since we are at a minimum, it is optimal to continue and
\[
\tilde{V}_2(\tilde{R}_1,\tilde{R}_2)=\EE(R_3|\GG_2)=4\left[1\cdot 2q+2\cdot \left(2p+\frac{1}{48}\right)+3\cdot \frac{1}{16}+4\cdot \frac{1}{8}\right]=\frac{37}{12} + 8p.
\]

This completes the analysis of the situation after two steps. 

We assume next that $\tilde{R}_1$ has been observed; that is, we know whether $S_1>0$ or $S_1<0$. If $S_1>0$, then $\EE(R_1|\GG_1)=2$, whereas
\begin{align*}
\tilde{W}_1(\tilde{R}_1)&=\sum_{k=1}^3 \PP(\tilde{R}_2=k|\GG_1)\tilde{V}_2(\tilde{R}_1,k)\\
&=\PP(S_2>S_1>0|S_1>0)\cdot (1.5)+\PP(S_1>S_2>0|S_1>0)\cdot \left(\frac{7}{3}+16p\right)\\
& \hspace{2cm} +\PP(S_1>0>S_2|S_1>0)\cdot \left(\frac{35}{12}+16q\right)\\
&=\frac12\cdot (1.5)+\frac14\left(\frac{7}{3}+16p\right)+\frac14\left(\frac{35}{12}+16q\right)=\frac{103}{48}>2,
\end{align*}
where we used the results from Cases 1-3 above. Thus, it is optimal to stop, and $\tilde{V}_1(\tilde{R}_1)=2$.

On the other hand, if $S_1<0$, then the walk is at a minimum, and so
\begin{align*}
\tilde{V}_1(\tilde{R}_1)&=\tilde{W}_1(\tilde{R}_1)=\sum_{k=1}^3 \PP(\tilde{R}_2=k|\GG_1)\tilde{V}_2(\tilde{R}_1,k)\\
&=\PP(S_2>0>S_1|S_1<0)\cdot (1.5)+\PP(0>S_2>S_1|S_1<0)\cdot (2.5)\\
& \hspace{2cm} +\PP(0>S_1>S_2|S_1<0)\cdot \left(\frac{37}{12} + 8p\right)\\
&=\frac14 \cdot (1.5)+\frac14 \cdot (2.5)+\frac12 \cdot \left(\frac{37}{12} + 8p\right)=\frac{61}{24} + 4p,
\end{align*}
using the results from Cases 4-6 above. We thus obtain
\begin{align*}
\tilde{V}&=\tilde{W}_0=\PP(S_1>0)\tilde{V}_1(1)+\PP(S_1<0)\tilde{V}_1(2)\\
&=\frac12\cdot 2+\frac12\cdot \left(\frac{61}{24} + 4p\right)=\frac{109}{48} + 2p.
\end{align*}
Since $0<p\leq 1/96$ we have $109/48<\tilde{V}\leq 55/24$, and both bounds are sharp.

\bigskip
{\em b)} Now suppose $p\geq q$. The analysis being very similar, we only indicate at which points it differs from the preceding case. We focus first on the situation after two steps. Note that in Case 2 it is now optimal to stop, with expected rank 2.5. On the other hand, in Case 5 it now becomes optimal to continue, with expected rank $\frac{7}{3}+16q$. The calculation of $\tilde{V}_1(\tilde{R}_1)$ changes as follows: If $S_1>0$, we now obtain 
\[
\tilde{W}_1(\tilde{R}_1)=\frac{101}{48} + 4q.
\]
This is still greater than $2=\EE(R_1|\GG_1)$, so it remains optimal to stop, and $\tilde{V}_1(\tilde{R}_1)=2$. On the other hand, if $S_1<0$ then we get $\tilde{V}_1(\tilde{R}_1)=\tilde{W}_1(\tilde{R}_1)=31/12$. The optimal expected rank is thus
$\tilde{V}=\frac12\cdot 2+\frac12\cdot \frac{31}{12}=\frac{55}{24}$.

Finally, a close examination of the above analysis reveals that the optimal rule is as stated in the theorem.
\end{proof}

\begin{example}
{\rm
For the two-sided exponential (or Laplace) distribution, we have $p = 1/192$ and $\tilde{V} = 73/32 = 2.28125$. For the Uniform$(-1,1)$ distribution, $p=1/96$ and $\tilde{V}=55/24\approx 2.2917$. We summarize the numerical results of this paper in Table~\ref{tab:summary}.
}
\end{example}

\begin{table}
\begin{center} 
\renewcommand{\arraystretch}{1.25}
\begin{tabular}{| c | c | r |}
\hline
{\bf Version} & {\bf Description} & {\bf Expected Rank} \\ \hline
Full Information & Lower bound & 2.2413 \\ \hline
Full Information & Laplace Distribution & $\approx 2.271$ \\ \hline
Full Information & Uniform Distribution & $\approx 2.279$ \\ \hline
Full Information & Maximum & $\frac{55}{24} = 2.291\overline{6}$ \\ \hline
Relative Ranks & Greatest Lower Bound & $\frac{109}{48} \approx 2.2708$ \\ \hline
Relative Ranks & Laplace Distribution & $\frac{73}{32} = 2.28125$ \\ \hline
Relative Ranks & Uniform Distribution & $\frac{55}{24} = 2.291\overline{6}$ \\ \hline
Relative Ranks & Maximum & $\frac{55}{24} = 2.291\overline{6}$ \\ \hline
Both Versions & Stopping Immediately & 2.5 \\ \hline
\end{tabular}
\caption{Summary of results for $n=3$}
\label{tab:summary}
\end{center}
\end{table}

\section{Concluding remarks}

We have derived the optimal stopping rules for $n=2$ and $n=3$, in both the full information version and the relative rank version of the problem. When $n=3$, both the optimal rule and the optimal expected rank depend on the distribution of the step sizes, though less so in the relative rank version. 

For the full information version of the problem it seems unlikely, in light of the complexity of the optimal rule already for $n=3$, that the problem can be solved exactly for even moderately large values of $n$. In the relative ranks version, the exact solution can probably be found for a few larger values of $n$, though we have not attempted to do so. One difficulty is that, while for $n=3$ precisely half of the probabilities of the 24 permutations did not depend on the distribution of $X$, this proportion seems to decrease rapidly as $n$ grows larger. Moreover, the probabilities that depend on the distribution can do so in more complicated ways. Even when $n=4$, for instance, we might need to consider probabilities such as $\PP(|X|_{(4)}<|X|_{(1)}+|X|_{(2)}+|X|_{(3)})$, $\PP(|X|_{(4)}<|X|_{(1)}+|X|_{(2)})$, $\PP(|X|_{(4)}+|X|_{(1)}<|X|_{(3)}+|X|_{(2)})$, etc. That said, since there are essentially only finitely many different stopping rules to consider, a ``brute force" computer algorithm could in principle come up with the optimal rule as long as $n$ is not too large.

A more interesting approach, however, would be to develop relatively simple stopping rules which perform well asymptotically for large $n$, and to aim for reasonably sharp upper and lower bounds on the ratio $V^{(n)}/n$, where $V^{(n)}$ denotes the optimal expected rank for an $n$-step problem. This will be the subject of a forthcoming paper.

\footnotesize

\end{document}